\newtheorem{lemma}{Lemma}[section]
\newtheorem{theorem}[lemma]{Theorem}
\newtheorem{remark}[lemma]{Remark}
\def\erf{\operatorname{erf}}
\newcommand{\R}{\mathbb{R}}
\newcommand{\N}{\mathbb{N}}
\newcommand{\Z}{\mathbb{Z}}
\newcommand{\T}{\mathcal{T}}
\author[J. D. Garc\'{\i}a-Salda\~{n}a]{Johanna D. Garc\'{\i}a-Salda\~{n}a}
\address{Dept. de Matem\`{a}tiques \\
Universitat Aut\`{o}noma de Barcelona \\ Edifici C. 08193
Bellaterra, Barcelona. Spain} \email{johanna@mat.uab.cat}
\author[A. Gasull]{Armengol Gasull}
\address{Dept. de Matem\`{a}tiques.
Universitat Aut\`{o}noma de Barcelona. Edifici C. 08193 Bellaterra,
Barcelona. Spain} \email{gasull@mat.uab.cat}
\subjclass[2010]{Primary: 34C05; Secondary: 34C25, 37C27, 42A10}
\keywords{Harmonic balance method, Fourier series, Period function,
Weak periodic solution, Gr\"obner basis.}
\date{}
\dedicatory{} \commby{}
\begin{document}

\title[Weak periodic solutions and the Harmonic Balance Method]
{Weak periodic solutions of  $x\ddot{x}+1=0$ and the Harmonic
Balance Method}
\begin{abstract}
We prove that the differential equation $x\ddot{x}+1=0$ has
continuous weak periodic solutions and compute their periods.
 Then, we use the Harmonic Balance Method until order six to approach
 these periods and to illustrate how the
sharpness of the method increases with the order. Our computations
rely on the Gr\"{o}bner basis method.
\end{abstract}

\maketitle

\section{Introduction and main results}
The nonlinear differential equation
\begin{equation}\label{xx+1}
x\ddot{x}+1=0,
\end{equation}
appears in the modeling of certain phenomena in plasma physics \cite{Ac-Sq}. In~\cite{Mi}, Mickens
calculates the period of its periodic orbits and also uses the $N$-th order Harmonic Balance
Method (HBM), for $N=1,2$, to obtain  approximations of these periodic solutions and of their
corresponding periods. Strictly speaking, it can be easily seen that neither
equation~\eqref{xx+1}, nor its associated system
\begin{equation}\label{sis1}
\left\{\begin{array}{lll}
\dot{x}=y,\\
\dot{y}=-\frac{1}{x},
\end{array}\right.
\end{equation}
which is singular at $x=0$, have periodic solutions. Our first
result gives two different interpretations of Mickens' computation
of the period.  The first one in terms of weak (or generalized)
solutions. In this work a weak solution will be a function
satisfying the differential equation~\eqref{xx+1} on an open and
dense set, but being of class ${\mathcal C}^0$ at some isolated
points. The second one, as the limit, when $k$ tends to zero, of the
period of actual periodic solutions of the extended planar
differential system
\begin{equation}\label{sis2}
\left\{\begin{array}{lll} \dot{x}=y,\\\dot{y}=-\frac{x}{x^{2}+k^{2}},
\end{array}\right.
\end{equation}
which, for $k\ne0,$ has a global center at the origin.

\begin{theorem}\label{main1}
\begin{enumerate}[(i)]
\item For the initial conditions $x(0)=A\ne0,\, \dot x(0)=0,$ the differential equation~\eqref{xx+1}
has a weak $\mathcal{C}^0$-periodic solution with period
$T(A)=2\sqrt{2\pi}A.$
\item  Let  $T(A;k)$ be the period of the periodic orbit of system~\eqref{sis2} with initial
conditions $x(0)=A,\, y(0)=0.$ Then
\[
\quad T(A;k)=4\,A\int_0^1\frac{ds}{\sqrt{\ln{\left(\frac{A^2+k^2}{A^2 s^2+k^2}\right)}}}\]
and
\[
\lim_{k\to0} T(A;k)=4A\int_0^1\frac1{\sqrt{-2\ln s}}ds=2\sqrt{2\pi}A=T(A).
\]
\end{enumerate}
\end{theorem}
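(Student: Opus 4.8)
The plan is to exploit that both systems are conservative, so each admits a first integral obtained by separating variables in $dy/dx$, and then to reduce the period to a Gamma-function integral. For part (i) I would first integrate \eqref{sis1}: writing $\frac{dy}{dx}=-\frac{1}{xy}$ and separating variables gives $\frac{y^{2}}{2}+\ln|x|=\text{const}$, and the initial data $x(0)=A>0$, $y(0)=0$ fix the constant as $\ln A$, so that $\dot x=-\sqrt{2\ln(A/x)}$ along the arc on which $x$ decreases from $A$ to $0$. The time spent on this arc is
\[
t_{1}=\int_{0}^{A}\frac{dx}{\sqrt{2\ln(A/x)}}=A\int_{0}^{1}\frac{ds}{\sqrt{-2\ln s}},
\]
after the scaling $x=As$, and the substitution $u=-\ln s$ turns the last integral into $\frac{1}{\sqrt2}\int_{0}^{\infty}u^{-1/2}e^{-u}\,du=\frac{1}{\sqrt2}\,\Gamma(1/2)=\sqrt{\pi/2}$, so that $t_{1}=A\sqrt{\pi/2}$.

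I would then construct the weak periodic solution by gluing. The key observations are that $x\mapsto-x$ is a symmetry of \eqref{xx+1}, and that although $\dot x\to-\infty$ as $x\to0^{+}$, the function $x(t)$ itself extends continuously with the value $0$ at $t=t_{1}$. Using the symmetry and the reflection $t\mapsto-t$, one assembles four congruent arcs $A\to0$, $0\to-A$, $-A\to0$, $0\to A$, each traversed in time $t_{1}$; the resulting function is of class $\mathcal C^{0}$ everywhere, solves \eqref{xx+1} on the open dense set $\{x\neq0\}$, and is periodic with period $T(A)=4t_{1}=4A\sqrt{\pi/2}=2\sqrt{2\pi}\,A$. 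Here I would state precisely the sense in which this glued function qualifies as a weak solution, namely that it is continuous and satisfies the equation off the isolated instants where $x=0$.

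For part (ii), the same separation of variables for \eqref{sis2} gives $\frac{y^{2}}{2}+\frac12\ln(x^{2}+k^{2})=\text{const}$, hence $y^{2}=\ln\frac{A^{2}+k^{2}}{x^{2}+k^{2}}$ along the orbit through $(A,0)$. Since for $k\neq0$ this orbit is a genuine closed curve surrounding the center, the period is four times the time from $x=A$ to $x=0$, and the scaling $x=As$ yields the stated formula for $T(A;k)$. The main obstacle is the passage to the limit $k\to0$: the integrand has an integrable singularity at $s=1$, and one must justify interchanging the limit with the integral. I would do this by monotonicity — a short computation shows $\frac{\partial}{\partial(k^{2})}\frac{A^{2}+k^{2}}{A^{2}s^{2}+k^{2}}<0$ for $s\in(0,1)$, so the integrand increases with $k$ — whence for any fixed reference value $k_{0}>0$ the integrand at $k_{0}$, itself integrable as the finite quarter-period of an actual orbit, dominates the whole family for $0<k\le k_{0}$. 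Dominated convergence then gives $\lim_{k\to0}T(A;k)=4A\int_{0}^{1}\frac{ds}{\sqrt{-2\ln s}}$, and the Gamma-function evaluation from part (i) identifies this limit with $2\sqrt{2\pi}\,A=T(A)$.
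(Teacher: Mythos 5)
Your proposal is correct and follows essentially the same route as the paper: the same first integral obtained by separating variables in (i) with the weak solution built by gluing arcs at the isolated zeros of $x$, the same Hamiltonian/symmetry reduction of the period integral in (ii), and the same monotone-domination plus dominated-convergence argument to pass to the limit $k\to 0$. The only differences are cosmetic: you evaluate the quarter-period directly via the substitution $u=-\ln s$ and $\Gamma(1/2)$ where the paper writes the explicit solution in terms of $\erf^{-1}$, and you certify integrability of the dominating function by the finiteness of the period of an actual orbit where the paper compares the integrand to $(1-s)^{-1/2}$ near $s=1$.
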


 Recall that the  $N$-th order HBM consists in
approximating the solutions of differential equations by truncated
Fourier series with $N$ harmonics and an unknown frequency; see for
instance~\cite{Mi-lib,Mic-book2} or Section~\ref{hbm} for a short
overview of the method. In~\cite[p. 180]{Mic-book2} the author asks
for techniques for dealing analytically with the $N$-th order HBM,
for $N\ge 3$. In~\cite{Ga-Ga2} it is shown how resultants can be
used when $N=3$. Here we utilize  a more powerful tool, the
computation of Gr\"obner basis (\cite[Ch. 5]{cox}), for going
further in the obtention of approximations of the function $T(A)$
introduced in Theorem~\ref{main1}.

Notice that equation~\eqref{xx+1} is equivalent to the family of differential equations
\begin{equation}\label{eq xm}
x^{m+1}\ddot{x}+x^m=0,
\end{equation}
for any $m\in\N\cup\{0\}$. Hence it is natural to approach the period function,
$$T(A)=2\sqrt{2\pi}A\approx 5.0132A,$$
by the periods of the trigonometric polynomials obtained applying
the $N$-th order HBM to~\eqref{eq xm}. Next theorem gives our
results for $N\le6.$ Here $[a]$ denotes the integer part of  $a.$

\begin{theorem}\label{main2} Let $\T_N(A;m)$ be the period of the truncated Fourier series
obtained applying the $N$-th order HBM to equation~\eqref{eq xm}. It
holds:
\begin{enumerate}[(i)]
\item For all $m\in\N\cup\{0\}$, \begin{equation}\label{PFfmic Sin}
\T_1(A;m)=2\pi \sqrt{\frac{2[\frac{m+1}2]+1}{2[\frac{m+1}2]+2}}\,A.
\end{equation}
\item For $m=0,$
\begin{align*}
 &\T_1(A;0)=\sqrt{2}\,\pi A\approx 4.4428 A,\qquad\quad\phantom{m} \T_2(A;0)=(\sqrt{218}/9)\,\pi A\approx 5.1539 A,\\
&\T_3(A;0)={\textstyle\frac{13810534\,\pi A}
{3\sqrt{5494790257313+115642506449\sqrt{715}}}}\approx4.9353 A,\quad \T_4(A;0)\approx 5.0455
A,\\
&\T_5(A;0)\approx 4.9841 A,\qquad\quad\qquad\qquad\qquad\qquad\qquad\phantom{mm} \T_6(A;0)\approx
5.0260 A,
\end{align*}
\item For $m=1,$
\begin{align*}
&\T_1(A;1)=\sqrt{3}\,\pi A\approx 5.4414 A, &&\T_2(A;1)\approx 5.2733 A,\\
&\T_3(A;1)\approx 5.1476 A, &&\T_4(A;1)\approx 5.1186 A.
\end{align*}
\item For $m=2,$
\begin{align*}
&\T_1(A;2)=\sqrt{3}\,\pi A\approx 5.4414 A, &&\T_2(A;2)\approx 5.2724 A,\\
&\T_3(A;2)\approx 5.1417 A. %&&\T_4(A;1)\approx 5.1186 A.
\end{align*}
\end{enumerate}
Moreover, the approximate values appearing  above  are roots of
given polynomials with integer coefficients. Whereby the Sturm
sequences approach can be used to get them with any desirable
precision.
\end{theorem}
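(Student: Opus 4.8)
I would convert each instance of the $N$-th order HBM into a polynomial system, solve item~(i) by hand, and reduce items~(ii)--(iv) to isolating one real root of a univariate integer polynomial produced by a Gr\"obner basis. First I would fix the ansatz dictated by the initial conditions $x(0)=A,\ \dot x(0)=0$, namely the even trigonometric polynomial $x(t)=\sum_{j=0}^N a_j\cos(j\omega t)$ with normalisation $\sum_{j=0}^N a_j=A$, the frequency $\omega$ being a further unknown (for $N=1$ this collapses to $x=A\cos(\omega t)$). A useful preliminary observation is that \eqref{eq xm} is invariant under the scaling $(x,t)\mapsto(\lambda x,\lambda t)$, whence $\T_N(A;m)=A\,\T_N(1;m)$; this both explains why every entry of the theorem is proportional to $A$ and lets me normalise $A=1$. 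Substituting the ansatz into the polynomial form $x^{m+1}\ddot x+x^m=0$, expanding all products of cosines by product-to-sum formulas, and equating to zero the coefficients of $\cos(j\omega t)$ for $j=0,1,\dots,N$, I obtain a square system of polynomial equations in $a_0,\dots,a_N,\omega$ with rational coefficients.

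For item~(i) the ansatz is $x=A\cos(\omega t)$, so the residual of \eqref{eq xm} is a combination of $\cos^{m+2}(\omega t)$ and $\cos^m(\omega t)$. By the De~Moivre expansion only one harmonic survives to be balanced: the constant term when $m$ is even and the first harmonic when $m$ is odd. In either case the surviving balance equation is linear in $A^2\omega^2$, and the quotient of the two relevant binomial coefficients simplifies (via $\binom{2p+2}{p+1}=\tfrac{2(2p+1)}{p+1}\binom{2p}{p}$ in the even case, and its odd counterpart) to $A^2\omega^2=\frac{2\,([\frac{m+1}2]+1)}{2[\frac{m+1}2]+1}$. Since $T=2\pi/\omega$, this is exactly \eqref{PFfmic Sin}, and no computer algebra is needed here.

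For items~(ii)--(iv) I would run elimination on the polynomial system: computing a Gr\"obner basis for a term order that eliminates $a_0,\dots,a_N$ in favour of $\omega$ (equivalently $T$) yields, after clearing denominators, a univariate polynomial $P_{N,m}(T)\in\Z[T]$ whose root set contains $\T_N(1;m)$. This is precisely the closing assertion of the theorem, and it makes the exact forms for $\T_2(A;0)$ and $\T_3(A;0)$ the relevant factors of $P_{2,0}$ and $P_{3,0}$, while the remaining decimals are the relevant real roots of the corresponding $P_{N,m}$. Because these polynomials have integer coefficients, a Sturm sequence isolates each real root and refines it to any prescribed accuracy, which legitimises the numerical values displayed.

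The main obstacle is not the formal setup but the selection and certification of the correct root. The balance system has many spurious solutions --- complex roots, roots with $\omega\le0$, and degenerate roots where harmonics collapse --- so I expect $P_{N,m}$ to carry several positive real roots, of which only one is the genuine HBM approximant. To single it out I would follow the branch that is continuous in the harmonic order, reduces to the first-order value~\eqref{PFfmic Sin} when the higher coefficients are set to zero, and remains near the true period $2\sqrt{2\pi}\,A$. Certifying this choice rigorously, together with the rapidly growing size of the Gr\"obner bases for $N=5,6$, is where essentially all of the difficulty concentrates.
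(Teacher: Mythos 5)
There is a genuine gap, and it sits in your very first step: your ansatz is not the one the theorem refers to. The paper's adaptation of the $N$-th order HBM (Section~\ref{hbm}) exploits the symmetry $\mathcal{F}(-u,-v)=\mathcal{F}(u,v)$ --- the weak solution satisfies $x(t+T/2)=-x(t)$, and any smooth approximant must have $\dot x(T/4)=0$ --- to restrict \emph{a priori} to odd cosine harmonics, $x_N(t)=\sum_{j=1}^{N}a_{2j-1}\cos((2j-1)\omega_N t)$ with $\sum_{j=1}^N a_{2j-1}=A$; the quantities $\T_N(A;m)$ in the statement are by definition produced by that scheme. Your full ansatz $\sum_{j=0}^{N}a_j\cos(j\omega t)$, balanced for $j=0,\dots,N$, is a genuinely different method as soon as $N\ge2$, and it provably does not yield the stated values. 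Take $N=2$, $m=0$, $A=1$: with $x=a_0+a_1\cos(\omega t)+a_2\cos(2\omega t)$ the balance equations are $1-\omega^2\bigl(\tfrac{a_1^2}{2}+2a_2^2\bigr)=0$, $a_1\bigl(a_0+\tfrac52 a_2\bigr)=0$, $\tfrac{a_1^2}{2}+4a_0a_2=0$, together with $a_0+a_1+a_2=1$. The odd-harmonic solution $a_0=a_2=0$ is incompatible with this system (the first equation forces $a_1^2\omega^2=2$ while the third forces $a_1=0$), so necessarily $a_0=-\tfrac52a_2$, $a_1^2=20a_2^2$, $\omega^2=1/(12a_2^2)$ and $a_2(\pm2\sqrt5-\tfrac32)=1$, giving periods approximately $3.64$ or $7.32$ --- not $\T_2(A;0)=(\sqrt{218}/9)\,\pi A\approx 5.1539\,A$, which the paper obtains from the ansatz $a_1\cos(\omega_2 t)+a_3\cos(3\omega_2 t)$ by annihilating the coefficients of $1$ and $\cos(2\omega_2 t)$. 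So under your setup items (ii)--(iv) would come out wrong; the odd-harmonics-only reduction is not an optional refinement but the prerequisite for everything that follows.

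The rest of your plan does track the paper. Your scaling observation $(x,t)\mapsto(\lambda x,\lambda t)$ is exactly the content of the paper's lemma $\T_N(A;m)=C_N(m)A$, proved there by the change of variables $s=At$ in the balance integrals; your item (i) computation --- a single surviving balance equation, the constant term for even $m$ and the first harmonic for odd $m$ --- reproduces \eqref{PFfmic Sin}, with your binomial-coefficient identity playing the role of the paper's integration-by-parts identity $(2k+2)I_{2k+2}=(2k+1)I_{2k}$; and Gr\"obner elimination down to a univariate integer polynomial in $\omega_N$ followed by Sturm sequences is precisely the paper's route for $N\ge2$ and the source of its closing assertion. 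One further discrepancy worth flagging: your root-selection rule (branch continuity in $N$ and closeness to the true period $2\sqrt{2\pi}A$) is not the paper's, and as a certification it is partly circular, since it presupposes knowledge of the quantity being approximated. The paper instead selects the solution minimizing the residual norm $\int_0^{\T_N}\mathcal{F}_N^2(t)\,dt$ (step 5 of Section~\ref{hbm}, following \cite{Ga-Ga}), an internally computable criterion that needs no reference to the exact period.
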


Notice  that the values $\T_1(A;m),$ for $m\in\{0,1,2\}$ given in
items (ii), (iii) and (iv), respectively, are already computed in
item (i). We only explicite them to clarify the reading.

Observe that the comparison of \eqref{PFfmic Sin} with the value
$T(A)$ given in Theorem~\ref{main1} shows that when $N=1$ the best
approximations  of $T(A)$ happen when $m\in\{1,2\}$. For this reason
we have applied the HBM for $N\le6$ and $m\le2$ to elucidate which
of the approaches is better. In the Table~\ref{tperror} we will
compare the percentage of the relative errors
\[
e_N(m)=100\left|\frac{ \T_N(A;m)-T(A)}{T(A)}\right|.
\]
 The best approximation that we have found corresponds to $\T_6(A;0).$ Our computers have had
problems to get the Gr\"{o}bner basis needed to fill the gaps of the table.

\begin{table}[h]\label{Ta1}
\begin{center}
$$\begin{array}{|c||c|c|c|}
\hline \quad e_N(m)\quad &\quad m=0\quad & \quad m=1
\quad& \quad m=2 \quad\\
\hline \hline
N=1 & 11.38\%& 8.54\%& 8.54\% \\
\hline
N=2 & 2.80\% & 5.19\%& 5.17\%  \\
\hline
N=3 &1.55\% & 2.68\%& 2.56\%  \\
\hline N=4 & 0.64\% & 2.10\% & -\\
\hline N=5 & 0.58\% & - & -\\
\hline N=6 & 0.25\% & - & -\\\hline
\end{array}$$
\caption{Percentage of relative errors $e_N(m)$.}\label{tperror}
\end{center}
\end{table}
The paper is organized as follows. Theorem~\ref{main1} is proved in Section~\ref{solus}.  In
Section \ref{hbm}  we describe the $N$-th order HBM adapted to our purposes. Finally, in
Section~\ref{sec sys} we use this method to demonstrate Theorem~\ref{main2}.

\section{Proof of Theorem~\ref{main1}.}\label{solus}

\noindent $(i)$ We start proving that the solution of \eqref{xx+1} with initial conditions
$x(0)=A$, $\dot x(0)=0$  and for $t\in\left(-\frac{\sqrt{2\pi}}{2}A,\frac{\sqrt{2\pi}}{2}A\right)$
is
 \begin{equation}\label{solx}
 x(t)=\phi_0(t):=A e^{-\left(\erf^{-1}\left(\frac{{2}\,t}{\sqrt{2\pi}
A}\right)\right)^2},
\end{equation}
 where $\erf^{-1}$ is the inverse of the error function
\[
\erf(z)=\frac 2{\sqrt{\pi}}\int_0^z e^{-s^2}\,ds.
\]
Notice that $\lim_{t\rightarrow
\pm\frac{\sqrt{2\pi}}{2}}\phi_0(t)=0$ and $\lim_{t\rightarrow
\pm\frac{\sqrt{2\pi}}{2}}\phi_0^\prime(t)=\mp\infty$. To obtain
\eqref{solx}, observe that from system~\eqref{sis1} we arrive at the
simple differential equation
$$
\frac{dx}{dy}=-xy,
$$
which has  separable variables  and can be solved by integration.  The particular solution that
passes by the point $(x,y)=(A,0)$ is
\begin{equation}\label{part sol}
x=Ae^{-y^2/2}.
\end{equation}
Combining \eqref{sis1} and  \eqref{part sol} we obtain
$$
\frac{dy}{dt}=-\frac{e^{y^2/2}}{A},
$$
again a separable equation. It has the solution
\begin{equation}\label{soly}
y(t)=-\sqrt{2}\,\erf^{-1}\left(\frac{2\,t}{\sqrt{2\pi} A}\right),
\end{equation}
which is well defined for
$t\in\left(-\frac{\sqrt{2\pi}}{2}A,\frac{\sqrt{2\pi}}{2}A\right)$
since $\erf^{-1}(\cdot)$ is defined in $(-1,1)$. Finally, by
replacing $y(t)$ in \eqref{part sol} we obtain \eqref{solx}, as we
wanted to prove.

By using $x(t)$ and $y(t)$ given by \eqref{solx} and \eqref{soly},
respectively, or using \eqref{part sol}, we can draw the phase
portrait of \eqref{sis1} which, as we can see in Figure
\ref{figura1}.(b), is symmetric with respect to both axes.  Notice
that its orbits do not cross the $y$-axis, which is a singular locus
for the associated vector field. Moreover, the solutions of
\eqref{xx+1} are not periodic (see Figure \ref{figura1}.(a)), and
the transit time of $x(t)$ from $x=A$ to $x=0$ is
$\sqrt{2\pi}\,A/2$.

\begin{figure}[h]
\begin{center}
\begin{tabular}{cc}
\epsfig{file=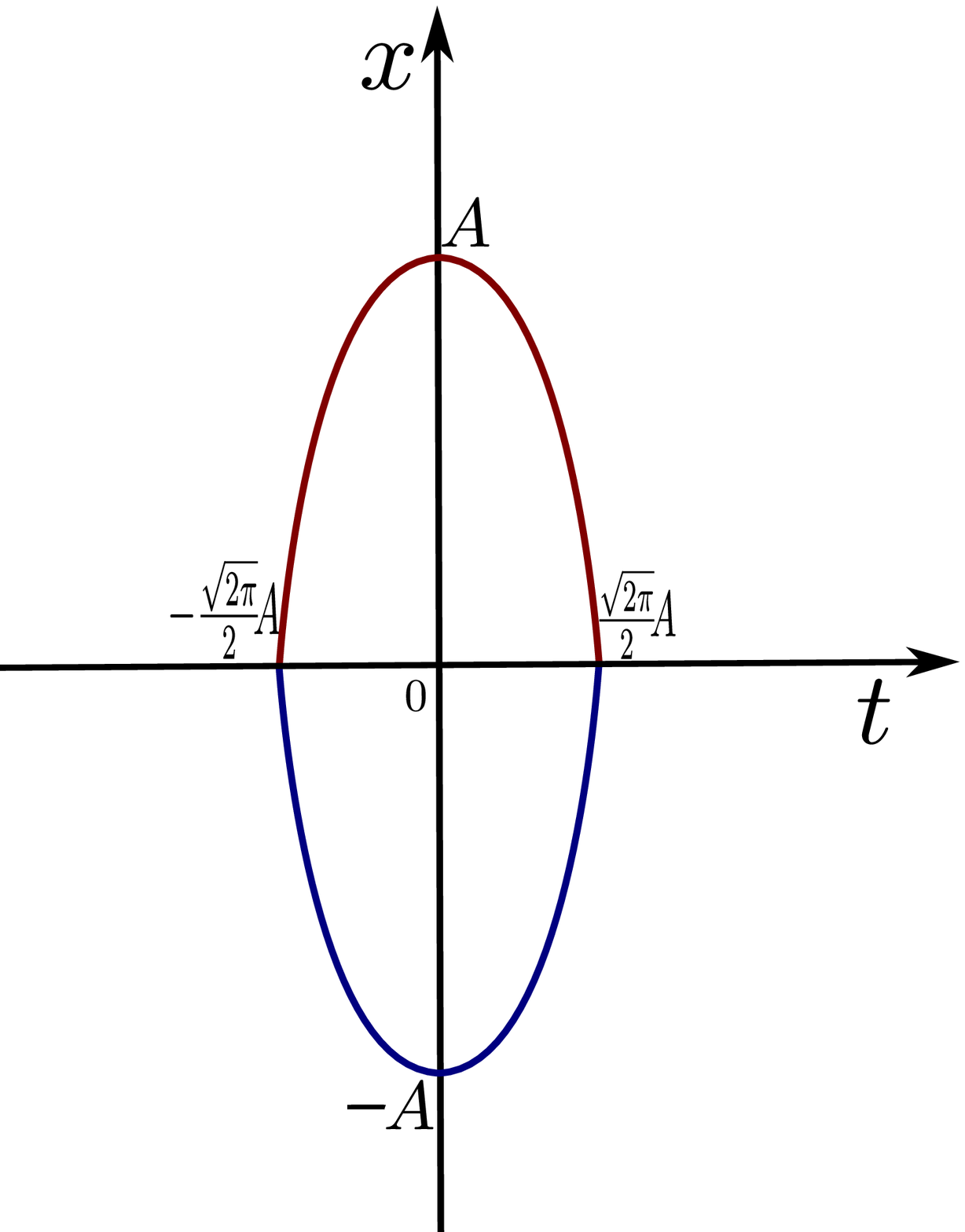,width=4.5cm}&
\qquad\qquad\epsfig{file=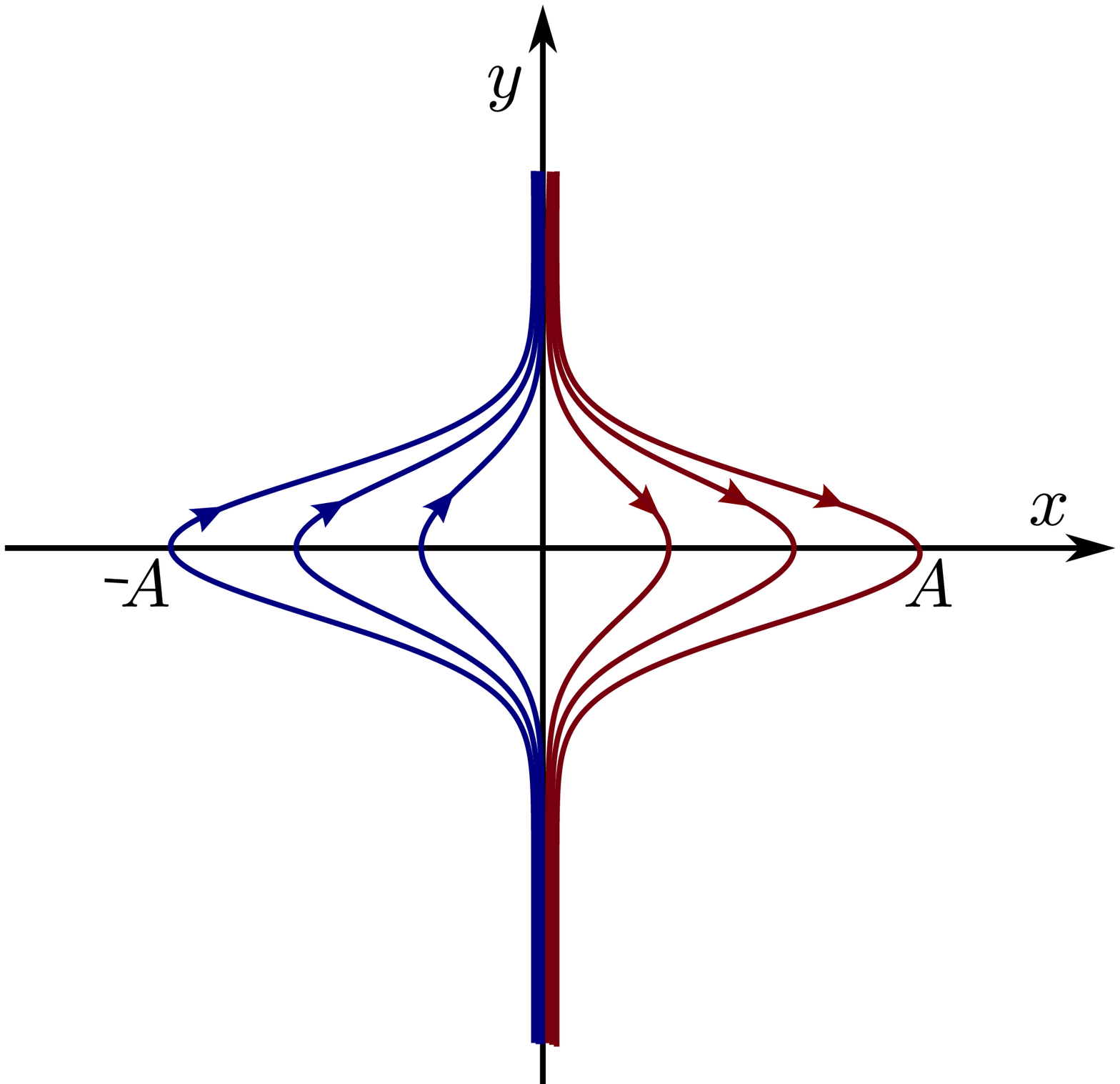,width=6cm}
\\(a)\quad&\qquad \qquad(b)
\end{tabular}
\caption{(a) Two solutions of equation \eqref{xx+1}. (b)
Phase-portrait of system~\eqref{sis1}.} \label{figura1}
\end{center}
\end{figure}

\begin{figure}[h]
\begin{center}
\begin{tabular}{c}
\epsfig{file=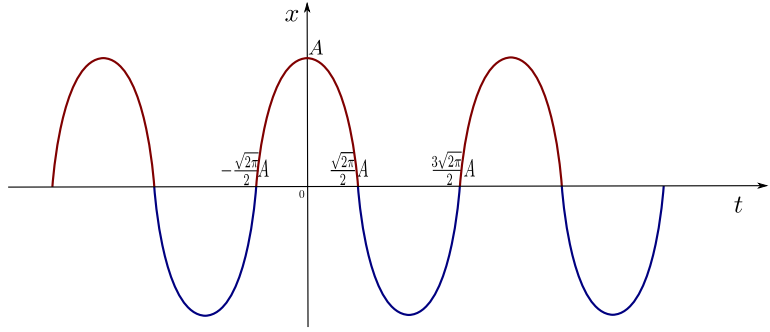,width=10cm}
\end{tabular}
\caption{A weak $\mathcal{C}^0$-periodic solution  of
\eqref{xx+1}.}\label{figura2}
\end{center}
\end{figure}

From  \eqref{solx} we introduce the ${\mathcal C}^0$-function,
defined on the whole $\R$, as
\begin{equation*}
\phi(t)=
\begin{cases}
(-1)^n\phi_0(t-n\sqrt{2}\pi), \quad \mbox{for}\quad t\in
\left(\frac{2n-1}{2}\sqrt{2\pi},\frac{2n+1}{2}\sqrt{2\pi}\right),
\quad n \in\Z,\\ \qquad\quad 0 \qquad\qquad\qquad\,\,
\mbox{for}\quad t=\frac{2n+1}{2}\sqrt{2\pi},\qquad n \in\Z,
\end{cases}
\end{equation*}
see Figure~\ref{figura2}. It is a $\mathcal{C}^0$-periodic function
of period $T(A)=2\sqrt{2\pi}A$ and $x=\phi(t)$
satisfies~\eqref{xx+1}, for all
$t\in\R\setminus\cup_{n\in\Z}\{\frac{2n+1}2\sqrt{2\pi}\}.$ Hence
$(i)$ of the theorem follows.

Notice that directly from \eqref{xx+1}, it is easy to see that this
equation can not have $\mathcal{C}^2$-solutions such that $x(t^*)=0$
for some $t^*\in\R,$ because this would imply that $\lim_{t\to t^*}
\ddot x(t)=\infty.$

\noindent $(ii)$ System~\eqref{sis2} is Hamiltonian with Hamiltonian function
$$H(x,y)=\frac{y^2}{2}+\frac{\ln(x^2+k^2)}{2}.$$
Since $\ln(x^2+k^2)$  has a  global minimum at $0$ and
$\ln(x^2+k^2)$ tends to infinity when $|x|$ does, system
\eqref{sis2} has a global center at the origin. In
Figure~\eqref{figura3} we can see its phase portrait for some values
of $k$. This figure also illustrates how the periodic orbits
of~\eqref{sis2} approach to the solutions of system~\eqref{sis1}.
\begin{figure}[h]
\begin{center}
\begin{tabular}{ccc}
\epsfig{file=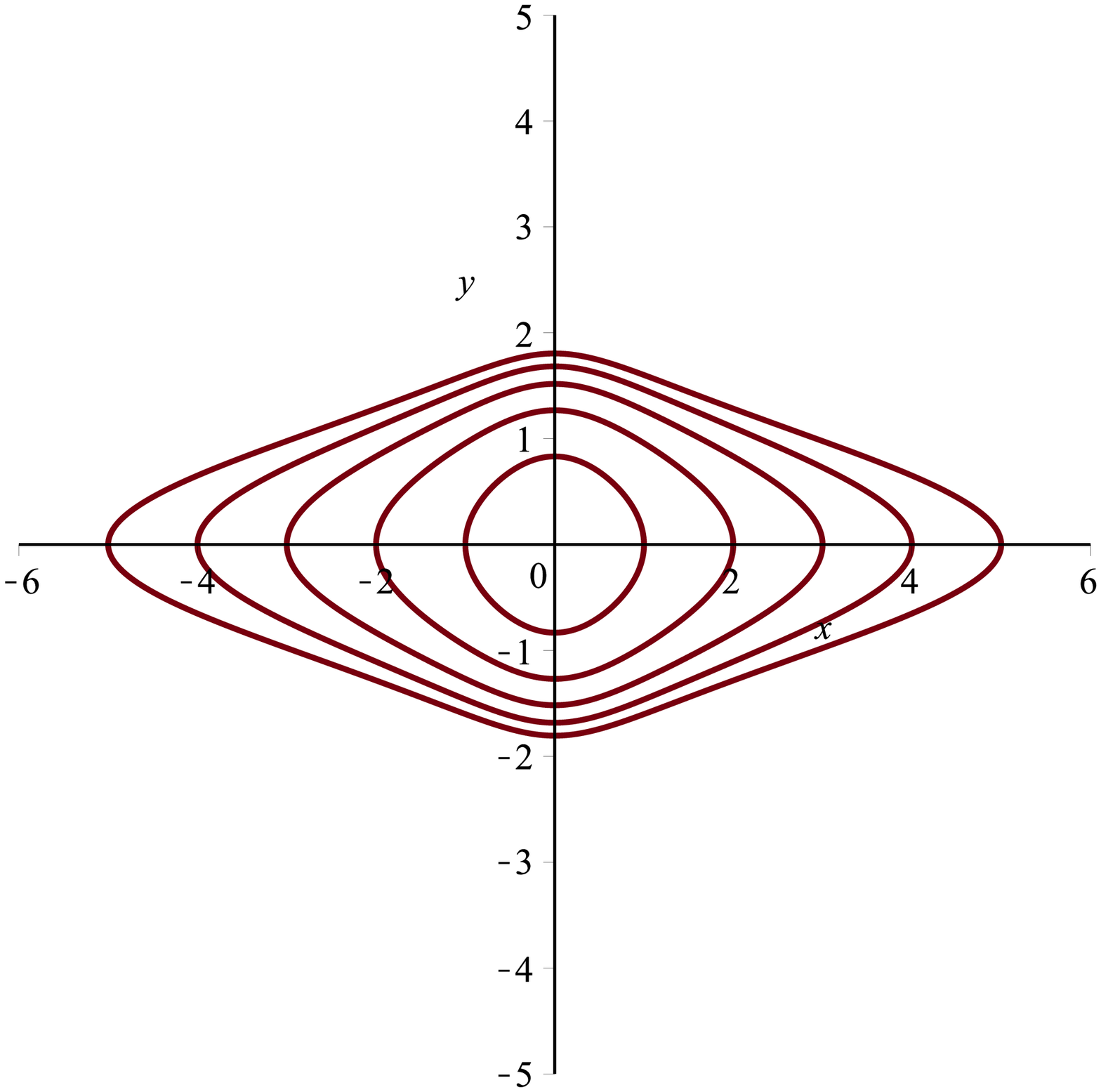,width=4.1cm}&\quad
\epsfig{file=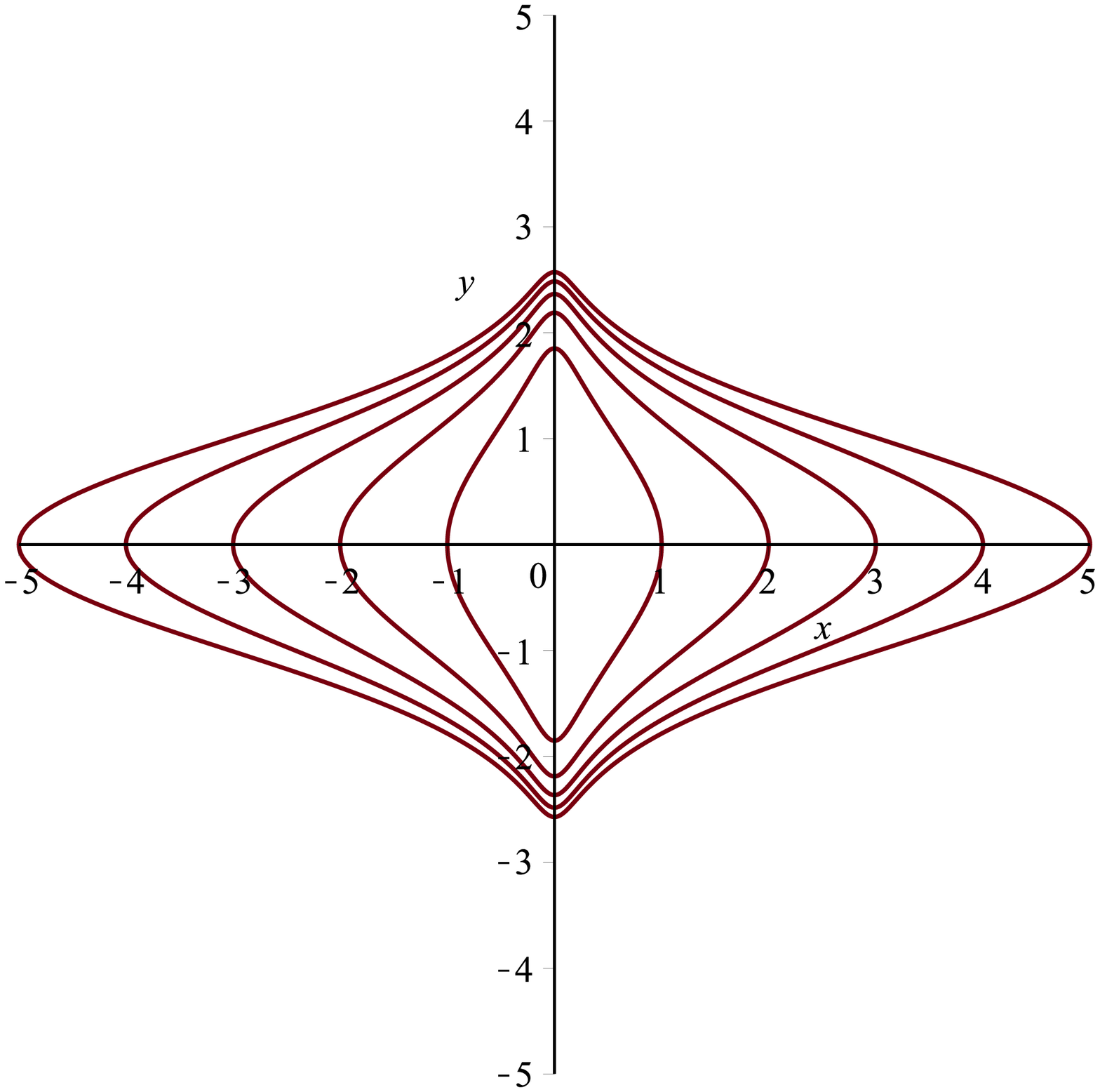,width=4.1cm}&\quad
\epsfig{file=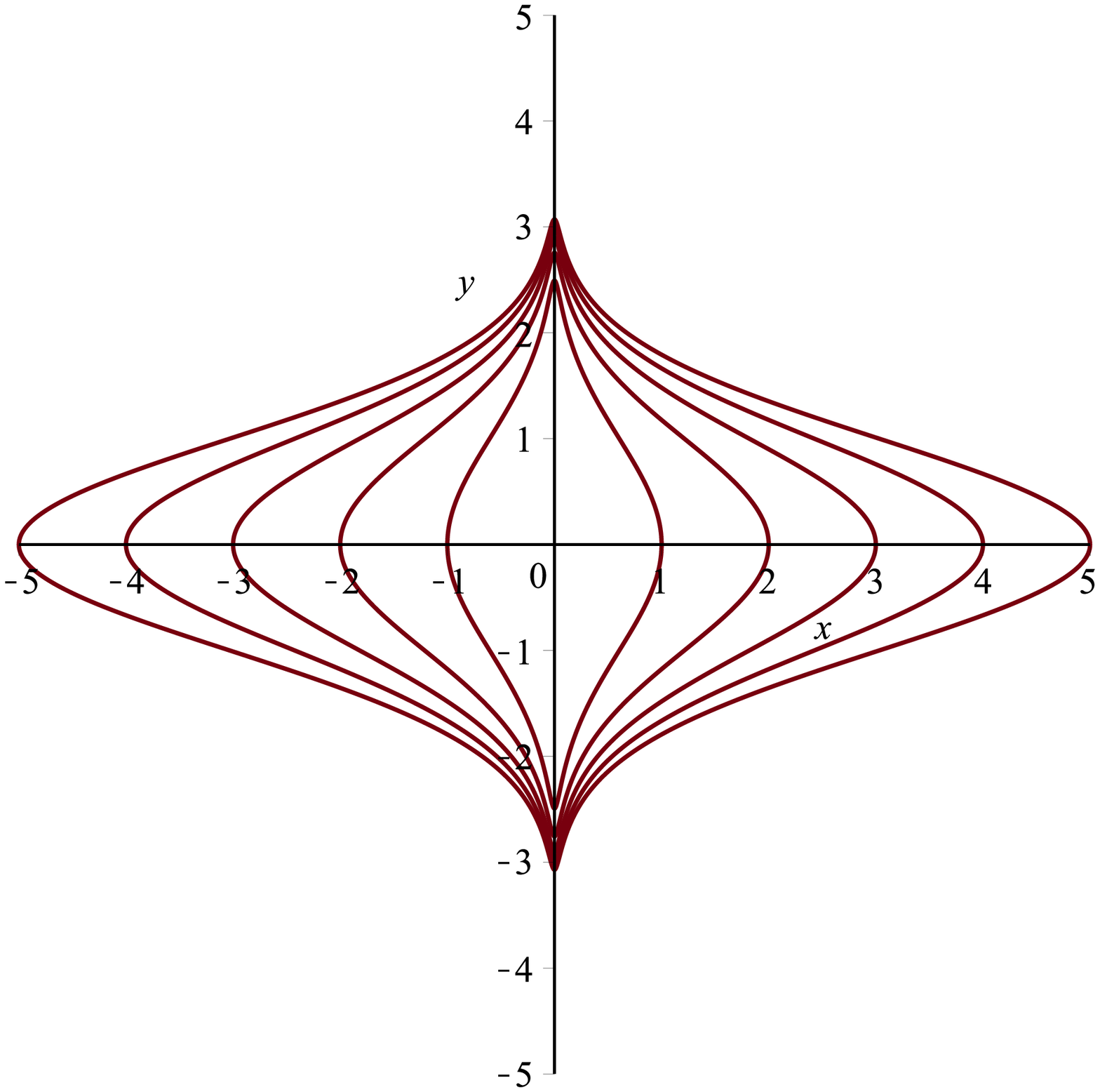,width=4.1cm}
\\$k=1$\quad&$k=\frac{1}{50}$\qquad &$k=\frac{1}{1000}$
\end{tabular}
\caption{Phase portraits of \eqref{sis2} for different  values of
$k$.} \label{figura3}
\end{center}
\end{figure}
Its period function is
$$
T(A;k)=2\int_{-A}^A \frac{dx}{y(x)}= 2\int_{-A}^A\frac{dx}{\sqrt{2h-\ln{(x^2+k^2)}}},
$$
where $h=\ln(A^2+k^2)/2$ is the energy level of the orbit passing
through the point $(A,0)$. Therefore,
$$
T(A;k)=2\int_{-A}^A\frac{ dx
}{\sqrt{\ln{\left(\frac{A^2+k^2}{x^2+k^2}\right)}}}=4\,A\int_0^1\frac{
ds}{\sqrt{\ln{\left(\frac{A^2+k^2}{A^2s^2+k^2}\right)}}},
$$
where we have used  the change of variable $s=x/A$ and  the symmetry with respect to $x.$ Then,
$$
\lim_{k\rightarrow 0}T(A;k)=\lim_{k\rightarrow 0}\int_0^1\frac{4\,A
\,ds}{\sqrt{\ln{\left(\frac{A^2+k^2}{A^2 s^2+k^2}\right)}}}.
$$
If we prove that
\begin{equation}
\label{intercanvio}
 \lim_{k\rightarrow 0}\int_0^1\frac{4\,A
\,ds}{\sqrt{\ln{\left(\frac{A^2+k^2}{A^2
s^2+k^2}\right)}}}=\int_0^1\lim_{k\rightarrow
0}\frac{4\,A \,ds}{\sqrt{\ln{\left(\frac{A^2+k^2}{A^2
s^2+k^2}\right)}}},
\end{equation}
then
$$
\lim_{k\rightarrow 0}T(A;k)=4\,A
\,\int_0^1\frac{ds}{\sqrt{-2\ln{\left({s}\right)}}}=2\sqrt{2\pi}A=T(A)
$$
and the theorem will follow. Therefore, for completing the proof, it
only remains to show that \eqref{intercanvio} holds. For proving
that, take any sequence $1/z_n,$ with $z_n$ tending monotonically to
infinity, and consider the functions
$f_n(s)=\left(\ln{\left(\frac{A^2z_n^2+1}{A^2
z_n^2s^2+1}\right)}\right)^{-1/2}.$ We have that the sequence
$\{f_n(s)\}_{n\in\N}$ is  formed by
 measurable and positive  functions defined on the interval $(0,1)$.
 It is not difficult to prove that
 it is a decreasing sequence. In particular, $f_n(s)< f_1(s)$ for all $n>1$. Therefore, if
we show that $f_1(s)$ is integrable, then we can apply the
Lebesgue's dominated convergence theorem~(\cite{Ru}) and
\eqref{intercanvio} will follow. To prove that
 $ \int_0^1f_1(s)<\infty $ note that for $s$ close to~1,
$$f_1(s)=\left(\ln{\left(\frac{A^2z_1^2+1}{A^2z_1^2s^2+1}\right)}\right)^{-1/2}
\sim\left(\frac{2A^2z_1^2(1-s)}{A^2z_1^2+1}\right)^{-1/2}.$$ Since
this last expression is integrable the result follows by the
comparison test for improper integrals.

\section{The Harmonic Balance Method}\label{hbm}

This section gives a brief description of the HBM applied  the
second order differential equations
\begin{equation}\label{im}
\mathcal{F}:=\mathcal{F}(x(t),\ddot{x}(t))=0,
\end{equation}  with $\mathcal{F}(-u,-v)=\mathcal{F}(u,v)$, and adapted to our
interests. Notice that  if $x(t)$ is a solution of \eqref{im} then $x(-t)$ also is a solution.

Suppose that equation \eqref{im}  has a $T$-periodic solution $x(t)$
with initial conditions $x(0)=A$,  $\dot{x}(0)=0$ and period
$T=T(A).$ If $x(t)$ satisfies $x(t)=x(-t)$ it is clear that its
Fourier series has no sinus terms and  writes as
\[
\sum_{k=1}^{\infty} a_k \cos(k\omega
t),\quad\mbox{with}\quad\sum_{k=1}^\infty a_k=A\quad\mbox{and}\quad
\omega=\frac{2\pi}T.
\]
As we have seen in previous section, the weak periodic solutions of
equation $x(t)\ddot x(t)+1=0$ that we want to approach satisfy the
above property. Moreover $x(T/4)=0$ and $\dot x(T/4)$ does not
exist. In any case, if we are searching smooth approximations to
this $x(t)$, they should also satisfy $\dot x(t)\ddot
x(t)+x(t)\dddot x(t)=0,$ and hence $\dot x(T/4)=0.$ For this reason,
in this work we will search Fourier series in cosinus, not having
the even terms $\cos(2j \omega t)$, $j\in\N\cup\{0\},$ which do not
satisfy this property. This type of a priori simplifications are
similar to the ones introduced in \cite{Minou} for other problems.

Hence, in our setting, the HBM of order $N$ follows the next five
steps:

1. Consider a trigonometric polynomial
\begin{equation}\label{solu}
x_{_N}(t)=\sum_{j=1}^{N} a_{2j-1} \cos((2j-1)\omega_N t)\quad
\mbox{with}\quad \sum_{j=1}^{N} a_{2j-1}=A.
\end{equation}

2. Compute the $2\pi/\omega_N$-periodic function
$\mathcal{F}_N:=\mathcal{F}(x_{_N}(t),\ddot{x}_{_N}(t))$, which has
also an associated Fourier series,
\[
\mathcal{F}_N(t)=\sum_{j\ge0} \mathcal{A}_{j} \cos(j\,\omega_N t),
\]
where $\mathcal{A}_{j}=\mathcal{A}_{j}({\mathbf a},\omega_N,A)$
$j\ge0,$ with ${\mathbf a}=(a_1,a_3,\ldots,a_{{2N-1}})$.

3. Find all values ${\bf a}$ and $\omega_N$ such that
\begin{equation}\label{sistema}
\mathcal{A}_{j}({\bf a},\omega_N,A)=0 \qquad\mbox{for}\quad 1\le
j\le j_N,
\end{equation}
where $j_N$ is the value such that \eqref{sistema} consists exactly of $N$ non trivial equations.
Notice also that each equation $\mathcal{A}_{j}({\bf a},\omega_N, A)=0$  is equivalent to
\begin{equation}\label{inte}
\int_0^{2\pi/\omega_N} \cos (j\omega_N t)\mathcal{F}_N(t)\,dt=0.
\end{equation}

4. Then the expression \eqref{solu}, with the values of ${\bf
a}={\bf a}(A)$ and $\omega_N=\omega_N(A)$  obtained in point~3,
provide candidates to be
 approximations of the actual periodic solutions of the initial
 differential equation. In particular, the functions
 $\T_N=\T_N(A)=2\pi/\omega_N$ give approximations of  the periods of the
 corresponding periodic orbits.

5. Choose, as  final approximation,  the one associated to the solution that minimizes the norm
\[
||\mathcal{F}_N(t)||=\int_0^{\T_N} \mathcal{F}_N^2(t)\,dt.
\]
%When the system  of polynomial equations \eqref{sistema} has several
% solutions we select the one which matches better with the known
%restrictions and the numerical behavior of the solutions.

\begin{remark} (i) Notice that going from order $N$ to
order $N+1$ in the method, implies to compute again all the
coefficients of the Fourier polynomial, because in general the
common Fourier coefficients of $x_{_N}(t)$ and $x_{_{N+1}}(t)$ do
not coincide.

(ii) The above set of equations \eqref{sistema} is a system of
polynomial equations which usually is not easy to solve. For this
reason in many works, see for instance \cite{Mi,Mic-book2} and the
references therein, only the values of $N=1,2$ are considered.  For
solving system \eqref{sistema} for $N\ge3$ we use the Gr\"obner
basis approach $($\cite{cox}$)$. In general this method is faster
that using successive resultants and moreover it does not give
spurious solutions.

(iii) As far as we know, the test proposed in point 5 to select the
best approach is not commonly used. We propose it following the
definition of accuracy of an approximated solution used
in~\cite{Ga-Ga} and inspired in the classical
works~\cite{Stokes,Ur}.

\end{remark}

\section{Application of the HBM}\label{sec sys}

We start proving a lemma that will allow to reduce our computations
to the case $A=1.$

\begin{lemma}  Let $\T_N(A;m)$ be the period of the truncated Fourier series
obtained applying the $N$-th order HBM to equation~\eqref{eq xm}.
Then  there exists a real constant $C_{N}(m)$  such that
$\T_N(A;m)=C_{N}(m)A$.
\end{lemma}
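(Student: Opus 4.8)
The plan is to exploit a scaling symmetry of equation~\eqref{eq xm}. The key observation is that the equation $x^{m+1}\ddot x + x^m = 0$ is homogeneous under rescaling of $x$ together with a corresponding rescaling of time, and this symmetry should carry over to the trigonometric-polynomial Ansatz used in the HBM. Concretely, I would first show that if $x(t)$ solves~\eqref{eq xm}, then so does $\lambda x(t/\mu)$ for appropriately related constants $\lambda,\mu$. Substituting $\lambda x(t/\mu)$ into~\eqref{eq xm} gives $\lambda^{m+1}x^{m+1}\cdot\lambda\mu^{-2}\ddot x + \lambda^m x^m = \lambda^{m+1}\mu^{-2}\cdot x^{m+1}\ddot x + \lambda^m x^m$, and using $x^{m+1}\ddot x = -x^m$ this becomes $(-\lambda^{m+2}\mu^{-2}+\lambda^m)x^m$, which vanishes precisely when $\mu^2=\lambda^2$, i.e.\ $\mu=\lambda$. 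Thus the scaling $(x,t)\mapsto(\lambda x,\lambda t)$ maps solutions to solutions, and in particular multiplies periods by $\lambda$.

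The second step is to verify that this symmetry is respected by the finite-order HBM, not merely by the exact equation. I would start from the Ansatz~\eqref{solu}, namely $x_N(t)=\sum_{j=1}^N a_{2j-1}\cos((2j-1)\omega_N t)$ with the constraint $\sum a_{2j-1}=A$. The claim is that if $(\mathbf a,\omega_N)$ is a solution of the balance equations~\eqref{sistema} for amplitude $A=1$, then $(A\mathbf a, \omega_N/A)$ is a solution for general amplitude $A$. To see this, I would plug $x_N^{(A)}(t):=A\sum a_{2j-1}\cos((2j-1)(\omega_N/A)t)$ into $\mathcal F$; by the homogeneity just established, $\mathcal F(A u, A^{-2}\cdot\ddot{(\,)})$ scales the function $\mathcal F_N$ by an overall factor $A^{m}$ (a nonzero constant), so every Fourier coefficient $\mathcal A_j$ is multiplied by the same nonzero constant and hence the vanishing conditions~\eqref{sistema} are preserved. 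Because the amplitude constraint $\sum (A a_{2j-1})=A$ also holds, the rescaled data is a bona fide HBM solution at amplitude~$A$.

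The final step is bookkeeping: since the frequency transforms as $\omega_N\mapsto\omega_N/A$ and the period is $\T_N=2\pi/\omega_N$, the period for amplitude $A$ equals $A$ times the period for amplitude $1$. Setting $C_N(m):=\T_N(1;m)$ gives $\T_N(A;m)=C_N(m)A$ as desired. One subtlety I would flag is the selection step~5 of the method: the minimization of $\|\mathcal F_N\|$ must pick out the \emph{same} branch of solution after rescaling. Since $\|\mathcal F_N\|$ is itself multiplied by a positive power of $A$ uniformly across all branches, the ordering of the competing solutions by norm is unchanged, so the selected solution transforms correctly and $C_N(m)$ is well defined.

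The main obstacle I anticipate is not conceptual but one of precision: one must track carefully how each of the objects — the coefficients $\mathbf a$, the frequency $\omega_N$, the function $\mathcal F_N$, and the selection norm — transforms under $(x,t)\mapsto(Ax,At)$, and in particular confirm that the time-rescaling $t\mapsto t/A$ (needed to keep the Ansatz in the form $\cos((2j-1)\omega_N t)$) is consistent with the $\mu=\lambda$ relation forced by the equation. Everything reduces to a single homogeneity computation, so I expect the proof to be short once the scaling is set up correctly.
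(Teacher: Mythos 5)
Your proposal is correct and takes essentially the same approach as the paper: both arguments show that $(\tilde{\mathbf a},\tilde\omega_N)$ solves the balance equations plus the amplitude constraint with $A=1$ if and only if $(A\tilde{\mathbf a},\tilde\omega_N/A)$ solves them for general $A\ne0$, via the time rescaling $s=At$ combined with the homogeneity $\mathcal F(\lambda u,\lambda^{-1}v)=\lambda^{m}\mathcal F(u,v)$, so that $\T_N(A;m)=\T_N(1;m)\,A$. Your extra check that the selection norm of step 5 scales uniformly (by the factor $A^{2m+1}$) across all solution branches, so the minimizing branch is preserved, is a detail the paper leaves implicit.
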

\begin{proof}
Consider $\mathcal{F}_N=x_N^{m+1}\ddot{x}_N+x_N^m=0$, with $x_N$ given in \eqref{solu}. We have to
solve the set of $N+1$ non-trivial equations
\begin{equation}\label{ee}
\int_0^{2\pi/\omega_N} \cos (j\omega_N t)\mathcal{F}_N(t)\,dt=0 \qquad 1\le j\le j_N,\quad \quad
\sum_{j=1}^{N} a_{2j-1}=A,
\end{equation}
with $N+1$ unknowns $a_1,a_3,\ldots,a_{{2N-1}}$ and $\omega_N$ and $A\ne0.$
 The lemma clearly follows if we prove next assertion:
$\tilde a_1,\tilde a_3,\ldots,\tilde a_{{2N-1}}$ and $\tilde
\omega_N$ is a solution of~\eqref{ee} with $A=1$ if and only if
$A\tilde a_1,A\tilde a_3,\ldots,A\tilde a_{{2N-1}}$ and $\tilde
\omega_N/A$ is  a solution of~\eqref{ee}. This equivalence is a
consequence of the fact that the change of variables $s=At$ writes
the integral equation in~\eqref{ee} as
\[\frac 1 A\int_0^{2\pi A/\omega_N}
\cos\Big(j\frac{\omega_N}A s\Big)\mathcal{F}_N\Big(\frac s
A\Big)\,ds=0
\] and from the structure of the right hand side equation of \eqref{ee}.
  Hence, $\T_N(A;m)=\T_N(1;m)A=: C_{N}(m) A,$ as we wanted to prove.
\end{proof}

\begin{proof}[Proof of Theorem~\ref{main2}]

Due to the above lemma, in the application of the $N$-th order HBM,
we can restrict our attention to the case $A=1.$

$(i)$ Following section \ref{hbm}, we consider $x_1(t)=\cos(\omega_1 t)$ as the first
approximation to the actual solution of the functional equation $\mathcal{F}(x(t),\ddot
x(t))=x^{m+1}\ddot{x}+x^m=0$. Then
$$
\mathcal{F}_1(t)=-\omega_1^2\cos^{m+2}(\omega_1 t)+\cos^{m}(\omega_1
t).
$$
When $m=2k$ the above expression writes as
\begin{equation}\label{con k}
\mathcal{F}_1(t)=-\omega_1^2\cos^{2k+2}(\omega_1
t)+\cos^{2k}(\omega_1 t)=0.
\end{equation}
Using \eqref{inte} for $j=0$ we get
\begin{equation}\label{con}
\int_0^{2\pi/\omega_1} \mathcal{F}_1(t)\,dt=
-\omega_1^2{I}_{2k+2}+{I}_{2k}= 0,
\end{equation}
where ${I}_{2\ell}=\int_0^{2\pi/\omega_1}\cos^{2\ell}(\omega_1 t)
dt$. By using integration by parts we prove that $
(2k+2){I}_{2k+2}=(2k+1){I}_{2k}. $ Combining this equality and
\eqref{con} we obtain that
$$
\omega_1=\sqrt{\frac{2k+2}{2k+1}},
$$
or equivalently,
$$
\T_1(A;m)=2\pi A\sqrt{\frac{2k+1}{2k+2}},
$$
that in terms of $m$ coincides with \eqref{PFfmic Sin}. The case $m$
odd follows similarly. The only difference is that instead of
condition \eqref{con} to find $\T_1(A;m)$ we have to impose that
\begin{equation*}
\int_0^{2\pi/\omega_1} \cos(\omega_1 t)\mathcal{F}_1(t)\,dt= 0,
\end{equation*}
because $\int_0^{2\pi/\omega_1} \mathcal{F}_1(t)\,dt\equiv0.$

\smallskip
\smallskip

\noindent $(ii)$ Case $m=0$. Consider the functional equation
$\mathcal{F}(x(t),\ddot{x}(t))=x(t)\ddot{x}(t)+1=0.$

When $N=2$, we take an approximation $ x_2(t)=a_1\cos(\omega_2
t)+a_3\cos(3\omega_2 t). $ The vanishing of the coefficients of 1
and $\cos(2\omega_2 t)$ in the Fourier series of $\mathcal{F}_2$
provides the nonlinear system
\begin{align*}
1-\frac{1}{2}(a_1^2+9 a_3^2)\omega_2^2&=0,\\
a_1+10a_3&=0,\\
a_1+a_3-1&=0.
\end{align*}
By solving it and applying point 5 in the HBM we  get that $
\omega_2=18/\sqrt{218}. $ Therefore,
\[\T_2(A;0)=\frac{\sqrt{218}}9\pi A\approx 5.1539 A, \] as we wanted
to prove.
\smallskip

\noindent  For the third-order HBM we use as approximate solution $
x_3(t)=a_1\cos(\omega_3 t)+a_3\cos(3\omega_3 t)+a_5\cos(5\omega_3
t). $ Imposing that the coefficients of 1, $\cos(2\omega_3 t)$, and
$\cos(4\omega_3 t)$ in $\mathcal{F}_3$ vanish we arrive at the
system
\begin{align*}
 P&=2-\left(a_1^2+9a_3^2+25a_5^2\right)\omega_3^2=0,\\
 Q&=a_1^2+10a_1a_3+34a_3a_5=0, \\
 R&=5a_3+13a_5=0,\\
 S&=a_1+a_3+a_5-1=0.
\end{align*}
Since all the equations are polynomial, the searching of its
solutions can be done by using the Gr\"obner basis approach,
see~\cite{cox}. Recall that the idea of this method consists in
finding a new systems of generators, say $G_1,G_2,\ldots, G_\ell,$
of the ideal of $\R[a_1,a_3,a_5,\omega_3]$ generated by $P,Q,R$ and
$S$. Hence,
 solving  $P=Q=R=S=0$ is equivalent to solve
 $G_i=0,$ $i=1,\ldots,\ell$. In general, choosing the lexicographic
 ordering in the  Gr\"obner basis
 approach, we get that the polynomials of the equivalent system have   triangular
structure with respect to the variables and it can  be easily
solved.

Now, by computing the Gr\"{o}bner basis of $\{P,Q,R,S\}$ with respect to
the lexicographic ordering $[a_1,a_3,a_5,\omega_3]$ we obtain a new
basis with  four polynomials ($\ell=4$), being one of them,
$$
G_1(\omega_3)=1553685075\omega_3^8-3692301106\omega_3^6+2143547654\omega_3^4-402413472\omega_3^2+20301192.
$$

Solving $G_1(\omega_3)=0$ and using again point 5 of our approach to
HBM we get that the solution that gives the better approximation is
$$
\omega_3=\frac{3\sqrt{5494790257313+115642506449\sqrt{715}}}{6905267}.
$$
Hence the expression $\T_3(A;0)=2\pi A/\omega_3$ of the statement follows.

\smallskip

When $N=4$ we consider
$
x_4(t)=a_1\cos(\omega_4 t)+a_3\cos(3\omega_4 t)+a_5\cos(5\omega_4
t)+a_7\cos(7\omega_4 t),
$
and  we arrive at the system
\begin{align*}
 P&=2-\left(a_1^2+9a_3^2+25a_5^2+49a_7^2\right)\,\omega_4^2=0,\\
 Q&=a_1^2+10a_1a_3+34a_3a_5+74a_5a_7=0,\\
 R&=5a_1a_3+13a_1a_5+29a_3a_7=0,\\
 S&=9a_3^2+50a_1a_7+26a_1a_5=0,\\
 U&=a_1+a_3+a_5+a_7-1=0.
\end{align*}
The Gr\"{o}bner basis of $\{P,Q,R,S,U\}$ with respect to the
lexicographic ordering $[a_1,a_3,a_5,a_7,\omega_4]$ is a new basis
with  five polynomials, being one of them an even polynomial in
$\omega_4$ of degree 16 with integers coefficients. Solving it we
obtain that the best approximation is $ \omega_4\approx {1.2453}, $
which gives $\T_4(A;0)\approx 5.0455\,A.$

For $N=5$ and $N=6$ we have done similar computations. In the case
$N=5$ one of the generators  of the Gr\"obner basis  is an even
polynomial in $\omega_5$ with integers coefficients and  degree 32.
When $N=6$ the same happens but with a polynomial of degree 64 in
$\omega_6$. Solving the corresponding polynomials we get that
$\omega_5\approx1.2606$ and $\omega_6\approx 1.2501$, and
consequently, $\T_5(A;0)\approx 4.9843\,A,$ and $\T_6(A;0)\approx
5.0260\,A.$

\smallskip
\smallskip

\noindent $(iii)$ Case $m=1$. We apply  the HBM to
$\mathcal{F}(x(t),\ddot{x}(t))=x^2(t)\ddot{x}(t)+x(t)=0.$

When $N=2,$ doing similar computations that in  item $(ii)$, we
arrive at
\begin{align*}
P&= 4-\left(3a_1^2+11a_1a_3+38a_3^2\right)\,\omega_2^2=0,\\
Q&= 4a_3-\left(a_1^3+22a_1^2a_3+27a_3^3\right)\omega_2^2=0,\\
R&=a_1+a_3-1=0.
\end{align*}
Again, by searching the Gr\"{o}bner basis of $\{P,Q,R\}$ with
respect to the lexicographic ordering $[a_1,a_3,\omega_2]$ we obtain
a new basis with three polynomials, being one of them
\begin{align*}
G_1(\omega_2)=7635411\omega_2^8-14625556\omega_2^6+5833600\omega_2^4-661376\omega_2^2+13824.
\end{align*}
 Notice that the equation $G(\omega_2)=0$ can be algebraically solved.
Nevertheless, for the sake of shortness, we do not give the exact
roots. Following again step 5 of our approach we get that the best
solution is $\omega_2\approx1.1915$, or equivalently that $
\T_2(A;1)\approx 5.2733 A.$

\smallskip

The HBM when  $N=3$ produces the system
\begin{align*}
P &=
4a_1-\left(3a_1^3+11a_1^2a_3+38a_1a_3^2+70a_1a_3a_5+102a_1a_5^2+43a_3^2a_5\right)\omega_3^2=0,\\
Q &= 4a_3-\left(a_1^3+22a_1^2a_3+27a_1^2a_5+70a_1a_3a_5 +27
a_3^3\right)\omega_3^2=0,\\
R &= 4a_5-\left(11a_1^2 a_3+54a_1^2 a_5+19a_1a_3^2 +86 a_3^2
a_5+75a_5^3\right)\omega_3^2=0,\\
S&=a_1+a_3+a_5-1=0.
\end{align*}

Computing the Gr\"{o}bner basis of $\{P,Q,R,S\}$ with respect to the
lexicographic ordering $[a_1,a_3,a_5,\omega_3]$ we get that one of
the polynomials of the new basis is an even polynomial in $\omega_3$
of degree 26 with integer coefficients. By solving it we obtain that
the best approximation is $\omega_3\approx 1.2206$, which produces
the value $\T_3(A;1)$ of the statement.

\smallskip

When $N=4$ we arrive at five polynomial equations, that we omit.
Once more, using the Gr\"{o}bner basis approach  we obtain a
polynomial condition in $\omega_4$ of degree 80. Finally, $
\omega_4\approx 1.2275 $ and $\T_4(1;A)\approx 5.1186.$

\smallskip
\smallskip

\noindent $(iv)$ When $m=2$ we have to  approach the solutions of
$\mathcal{F}(x(t),\ddot{x}(t))=x^3(t)\ddot{x}(t)+x^2(t)=0.$ We do
not give  the details of the proof because we get our results by
using exactly the same type of computations.

\end{proof}

\begin{remark}
For each $N$ and $m$ our computations also provide a trigonometric
polynomial that approaches the continuous weak periodic solution
$\phi(t)$ given in the proof of Theorem \ref{main1}.
\end{remark}
\subsection*{Acknowledgements}
The two authors are  supported by the MICIIN/FEDER  grant number
MTM2008-03437, FEDER-UNAB10-4E-378 and the Generalitat de Catalunya
grant number 2009-SGR 410. The first author is also supported by the
grant FPU AP2009-1189.

\end{document}